\providecommand{\keywords}[1]{\textbf{Keywords:} #1}
\renewcommand\footnotemark{}
\title{Robustness properties of marginal composite likelihood estimators}
\author{Helen Ogden \thanks{This work was supported by the
Engineering and Physical Sciences Research Council [grant numbers EP/P50578X/1, EP/K014463/1]}}
\date{University of Warwick, Coventry, UK}
\newtheorem{theorem}{Theorem}
\newtheorem{lemma}{Lemma}
\theoremstyle{definition}
\newtheorem{example}{Example}
\begin{document}

\maketitle

\begin{abstract}
Composite likelihoods are a class of alternatives to the full likelihood
which are widely used in many situations in which the likelihood itself is intractable.
A composite likelihood may be computed without the need to specify the full
 distribution of the response, which means that in some situations the
resulting estimator will be more robust to model misspecification than the maximum likelihood estimator.
The purpose of this note is to show that such increased robustness is not guaranteed. An example
is given in which various marginal composite likelihood estimators are inconsistent under
model misspecification, even though the maximum likelihood estimator is consistent.
\end{abstract}

\keywords{Consistency; Generalized linear mixed model; Random-effects misspecification}

\section{Introduction}

Suppose we observe independent samples $y^{(1)}, \ldots, y^{(n)}$,
where each $y^{(i)}=(y^{(i)}_1,\ldots,y^{(i)}_m)$ is assumed to be a sample from
 a model depending on
 an unknown parameter $\theta$. The likelihood
\[L(\theta) = \prod_{i=1}^n L(\theta;y^{(i)})\]
is sometimes difficult to compute, and composite likelihoods \citep{Lindsay1988}
provide a class of alternatives for conducting inference about $\theta$ in such circumstances.
A marginal composite likelihood
\[L^C(\theta) = \prod_{i=1}^n \prod_{s \in S} L(\theta;y^{(i)}_s)^{w_s}\]
is formed by taking a product of component likelihoods, each
of which is the likelihood given some subset of the data $y_s$,
where $w_s$ is a weight assigned to component $s$. We will assume from now on that $w_s=1$ for
all $s$, although similar results will hold for other choices of the weights.
There are a large number of possible choices for the subsets $s$ contained in $S$.
One option is to take each $s$ to be a pair of elements $\{j,k\}$, giving a pairwise likelihood \citep{Cox2004}.
A review of composite likelihoods and their many uses
is given by \cite{Varin2011}.

If the model is correctly specified, the composite
likelihood estimator will be consistent as $n \rightarrow \infty$, provided that the parameter
of interest remains identifiable, although the estimator will typically have a higher asymptotic variance than
the maximum likelihood estimator.
There is a hope that some compensation for this loss of efficiency may be provided
by an increased robustness of the composite likelihood estimator to misspecification of the model \citep{Varin2011,Xu2011}.
This is motivated by the fact that it is not necessary to specify the full distribution of the
response in order to be able to compute a composite likelihood.
If the marginal distributions of
$Y_{s}$ for each subset $s \in S$ are correctly specified,
 then the corresponding estimator
of $\theta$ will be consistent as $n \rightarrow \infty$, even if the full model
 is misspecified. The maximum likelihood estimator
 need not be consistent in such a setting, since the likelihood relies on the full, misspecified, distribution
of $Y$.

In some situations the relevant marginal distributions themselves may be misspecified, in which case
the marginal composite likelihood estimator no longer retains this robustness property.
However, in that case the full distribution of $Y$ must also be incorrect,
so the maximum likelihood estimator need not be robust to this misspecification either.
 From this, it is tempting to conclude that a marginal composite likelihood
estimator must always be at least as robust to model misspecification as the full likelihood estimator.
We give an example to demonstrate that this is not the case. Our example is a generalized linear mixed model,
under misspecification of the random-effects distribution. We consider asymptotics as the number of random
effects and the amount of information on each random effect simultaneously tend to infinity. The maximum likelihood estimator of the regression coefficient is consistent in this setting, even if the random-effect distribution is misspecified, but
various marginal composite likelihood estimators are not consistent under the same misspecification.

\section{Robustness under random-effect misspecification}
\subsection{A two-level model}
We consider a generalized linear mixed model, with
two-level nested structure.
Suppose that there are $m$ observations $y^{(i)}=(y^{(i)}_{1},\ldots,y^{(i)}_{m})$
on each of $n$ items $i=1,\ldots,n$, and that covariates $x_i$ are recorded for each item.
 The distribution of the response depends on the covariates through a linear predictor
$\eta_i$, and conditional on knowledge of $\eta$,
the $m$ observations on each item are independent. We model
$\eta_i = \beta^T x_i + b_i,$
where $b_i$ are independent random effects, which are assumed to be drawn
from a $N(0,\sigma^2)$ distribution.
We are interested in the robustness of estimators of $\beta$ to deviations from this assumed
random-effects distribution.
 We suppose that each $b_i$ is actually drawn independently from a distribution such that
$E(b_i) = 0$ and $\text{var}(b_i) < \infty$.

Under this misspecification, we are interested in the limit
 $\beta^\infty_m$ of the maximum likelihood estimator of $\beta$ as $n \rightarrow \infty$,
  and how this limit varies with $m$.
In particular, we will show that as $m \rightarrow \infty$, $\beta^\infty_m \rightarrow \beta_0$,
so that if $n$ and $m$ simultaneously tend to infinity, the maximum likelihood estimator
is consistent.

\subsection{Consistency of the maximum likelihood estimator}

The likelihood for $\theta=(\beta,\sigma)$ is given by
$L(\theta;y) = \prod_{i=1}^n L_i(\theta;y^{(i)}),$
where
\[L_i(\theta;y^{(i)}) = \int_{-\infty}^\infty \left\{ \prod_{j=1}^{m} f_y(y^{(i)}_{j} \mid \eta_i = \beta^T x_i + b_i) \right\} \frac{1}{\sigma} \phi\left(\frac{b_i}{\sigma}\right) db_i,\]
where $\phi(.)$ is the standard normal density function.
We write $\ell_i(\theta;y^{(i)}) = \log L(\theta;y^{(i)})$ for the contribution to the log-likelihood from $y^{(i)}$, and $u_i(\theta;y) = \nabla_\theta \ell_i(\theta;y)$ for the corresponding score function.  In the case that $m$ is fixed and $n \rightarrow \infty$,
the results of \cite{White1982} show that the maximum likelihood estimator of $\theta$ will converge to the
value $\theta^{\infty}_m$ which solves $\bar u (\theta) = E\left\{u_i(\theta,Y^{(i)})\right\} = 0$, where the expectation is taken over the true distribution of $Y^{(i)}$ and the covariates $x_i$.

Intuitively, for large $m$, it should possible to obtain an estimate of
 the value of each linear predictor $\eta_i$  from the data $y^{(i)}$, which will be close to
 the true value $\eta_i^0$. This means that for sufficiently large $m$,
 inference given the data $y$
 should be similar to the inference we would obtain if we were given the true value $\eta_i^0$
 of each linear predictor $\eta_i$, and assume a linear model
 $\eta_i = \beta^T x_i + b_i$, where $b_i$ are assumed to be independent $N(0,\sigma^2)$ error terms. Thus,
 for large $m$, the problem is reduced to studying the impact of incorrectly assuming that the
 errors in a linear model are normally distributed.

To formalize this argument,
write
\[\ell_i(\theta;\eta_i^0) = -\frac{1}{2} \log (2 \pi \sigma) - \frac{1}{2 \sigma^2}(\eta_i^0-\beta^T x_i)^2\]
for the log-likelihood for $\theta$ in the linear model given $\eta_i^0$, and
$u_i(\theta;\eta^{0}_i) = \nabla_\theta \ell_i(\theta;\eta_i^0)$ for the
 corresponding score function. We obtain the following result, whose proof is given in the appendix.
\begin{lemma}
\label{thm:same_as_lm}
As $m \rightarrow \infty$,
\[u_i(\theta;y^{(i)}) = u_i(\theta;\eta_i^0) +o(1).\]
\end{lemma}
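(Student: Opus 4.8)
The plan is to rewrite the marginal score $u_i(\theta;y^{(i)})$ as a posterior expectation and then exploit the fact that this posterior concentrates as $m\to\infty$. First I would change variables in the integral defining $L_i(\theta;y^{(i)})$ from $b_i$ to the linear predictor $\eta=\beta^T x_i + b_i$, which has unit Jacobian and turns $\sigma^{-1}\phi(b_i/\sigma)$ into $h_\theta(\eta):=\sigma^{-1}\phi\{(\eta-\beta^T x_i)/\sigma\}$, the $N(\beta^T x_i,\sigma^2)$ density at $\eta$; note $\log h_\theta(\eta)$ is exactly the linear-model log-likelihood $\ell_i(\theta;\eta)$ with $\eta$ in place of $\eta_i^0$. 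Writing $g_i(\eta):=\sum_{j=1}^m \log f_y(y^{(i)}_j\mid\eta)$ for the conditional log-likelihood of $\eta$ from the $m$ observations on item $i$, we get $L_i(\theta;y^{(i)})=\int e^{g_i(\eta)}h_\theta(\eta)\,d\eta$, in which $\theta$ enters only through $h_\theta$. Differentiating under the integral sign (justified by the Gaussian tails of $h_\theta$) and dividing by $L_i$ yields
\[u_i(\theta;y^{(i)})=\frac{\int e^{g_i(\eta)}\nabla_\theta h_\theta(\eta)\,d\eta}{\int e^{g_i(\eta)}h_\theta(\eta)\,d\eta}=E_{\pi_i}\{u_i(\theta;\eta)\},\]
where $\pi_i(\eta)\propto e^{g_i(\eta)}h_\theta(\eta)$ is the posterior density of $\eta_i$ given $y^{(i)}$ and $u_i(\theta;\eta)=\nabla_\theta\ell_i(\theta;\eta)$ is the linear-model score.

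The second step is to show that $\pi_i$ concentrates, as $m\to\infty$, on a point converging to $\eta_i^0$. Since the conditional model $f_y(\cdot\mid\eta)$ is correctly specified — only the random-effects distribution is misspecified — and $y^{(i)}_1,\dots,y^{(i)}_m$ are, conditionally on the realized $\eta_i^0$, independent draws from $f_y(\cdot\mid\eta_i^0)$, standard maximum-likelihood asymptotics in $m$ give that the maximiser $\hat\eta_i$ of $g_i$ satisfies $\hat\eta_i\to\eta_i^0$ with $|g_i''(\hat\eta_i)|$ of order $m$. A Laplace-type argument then shows $\pi_i$ is, to leading order, $N\bigl(\hat\eta_i+O(m^{-1}),\,O(m^{-1})\bigr)$: near $\hat\eta_i$, $\log h_\theta$ contributes only $O(1)$ to the posterior log-density, so it shifts the mode by $O(m^{-1})$ and leaves the curvature of order $m$; hence $E_{\pi_i}(\eta)\to\eta_i^0$ and $\text{var}_{\pi_i}(\eta)\to 0$.

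Finally, because $u_i(\theta;\eta)$ is, for fixed $\theta$, at most a quadratic polynomial in $\eta$, convergence of the first two posterior moments of $\eta$ to those of the point mass at $\eta_i^0$ gives $E_{\pi_i}\{u_i(\theta;\eta)\}\to u_i(\theta;\eta_i^0)$, which is the claim. The main obstacle is making this last passage to the limit rigorous: since $u_i(\theta;\eta)$ is unbounded in $\eta$, one must control the contribution of the tails of $\pi_i$ to its moments rather than rely on concentration near $\hat\eta_i$ alone. I would handle this by splitting the integral at a shrinking neighbourhood of $\hat\eta_i$, using the quadratic Taylor expansion of $g_i$ and the usual Laplace error bounds on the neighbourhood, and bounding the complementary region using that $g_i(\eta)-g_i(\hat\eta_i)$ is there at most a negative multiple of $m$ while the Gaussian factor $h_\theta$ keeps all moments finite; this step is where the mild regularity conditions on the conditional family $f_y(\cdot\mid\eta)$ (identifiability of $\eta$, smoothness, and integrability of $g_i$) enter.
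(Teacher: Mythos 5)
Your proposal is correct, but it takes a genuinely different route from the paper. The paper applies a Laplace approximation to the log-likelihood itself, writes the difference $\ell_i(\theta_1;y^{(i)})-\ell_i(\theta_2;y^{(i)})$, shows that the determinant corrections $\tfrac12\log H_{\theta_1}-\tfrac12\log H_{\theta_2}$ cancel asymptotically because the $O(m)$ data term dominates both Hessians, identifies the remaining $\theta$-dependence with the prior evaluated at $\eta_i^0$, and finally recovers the score statement by setting $\theta_2=\theta_1+h$ and letting $h\to 0$. You instead start from the exact identity $u_i(\theta;y^{(i)})=E_{\pi_i}\{u_i(\theta;\eta)\}$ (Fisher's identity for latent-variable models), so that no approximation of the likelihood or its normalising constant is ever needed; the Laplace-type analysis enters only to establish that the posterior $\pi_i$ concentrates at $\eta_i^0$ with variance $O(m^{-1})$, and the conclusion follows because the linear-model score is at most quadratic in $\eta$, so convergence of the first two posterior moments suffices. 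Your route buys two things: it avoids the Hessian-ratio computation entirely, and it avoids differentiating an $o(1)$ expansion in $\theta$ (the paper's final step implicitly requires the $o(1)$ to be uniform in $h$, which is not addressed there). The paper's route, in exchange, yields the statement about log-likelihood differences as a by-product, not just the score. Your identification of the remaining technical work --- controlling the tails of $\pi_i$ when integrating the unbounded quadratic score --- is the right obstacle to flag, and your proposed split into a shrinking neighbourhood of $\hat\eta_i$ plus an exponentially suppressed complement is the standard and adequate way to close it; the paper itself does not carry out the analogous tail control for its Laplace error term, so your sketch is at least as rigorous as the published argument.
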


In the linear model setting, the impact of the distribution of the error term has been well studied.
We may use the results from this setting to show that the asymptotic bias in $\hat \beta$ diminishes
with $m$.
\begin{lemma}
As $m \rightarrow \infty$, $\beta^\infty_m \rightarrow \beta_0$.
\label{thm:bias_tends_to_zero}
\end{lemma}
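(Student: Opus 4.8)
The plan is to combine Lemma~\ref{thm:same_as_lm} with the classical fact that the ordinary least squares estimator of the mean structure in a linear model is unbiased irrespective of the error distribution, provided the errors have mean zero and are uncorrelated with the covariates. By Lemma~\ref{thm:same_as_lm}, for large $m$ the per-item score $u_i(\theta;y^{(i)})$ is uniformly close to the score $u_i(\theta;\eta_i^0)$ of the Gaussian linear model in which one observes the true linear predictor $\eta_i^0 = \beta_0^T x_i + b_i$. Consequently the estimating equation $\bar u(\theta) = E\{u_i(\theta;Y^{(i)})\} = 0$ that defines $\theta^\infty_m$ should, as $m \rightarrow \infty$, approach the limiting equation $\bar u_\infty(\theta) := E\{u_i(\theta;\eta_i^0)\} = 0$; it then suffices to show that the $\beta$-coordinate of the (unique) root of $\bar u_\infty$ is $\beta_0$, together with a stability argument that transfers this to the roots for finite~$m$.

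First I would compute $\bar u_\infty$ explicitly. From the displayed expression for $\ell_i(\theta;\eta_i^0)$, the $\beta$-component of the score is $\sigma^{-2}(\eta_i^0 - \beta^T x_i)x_i$. Substituting $\eta_i^0 = \beta_0^T x_i + b_i$ and taking expectations over the true distribution of $(x_i,b_i)$, and using that $b_i$ has mean zero and is independent of $x_i$, gives
\[E\{\nabla_\beta \ell_i(\theta;\eta_i^0)\} = \sigma^{-2}\, E(x_i x_i^T)(\beta_0 - \beta).\]
Provided $E(x_i x_i^T)$ is positive definite, which is the usual identifiability requirement for the regression coefficient, this vanishes precisely when $\beta = \beta_0$, for every value of $\sigma$. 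The $\sigma$-component of $\bar u_\infty = 0$ then fixes $\sigma$ at a value $\sigma_0$ determined by $\text{var}(b_i)$, but this value plays no further role; the point is simply that the root of $\bar u_\infty$ is $(\beta_0,\sigma_0)$ with $\beta$-coordinate equal to the true $\beta_0$.

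The remaining, and main, step is to argue that the $\beta$-coordinate $\beta^\infty_m$ of the root of $\bar u$ converges to $\beta_0$. For this I would first upgrade Lemma~\ref{thm:same_as_lm} from a pointwise-in-$\theta$ statement to convergence $\bar u(\theta) \rightarrow \bar u_\infty(\theta)$ that is uniform on a neighbourhood of $(\beta_0,\sigma_0)$; this needs the $o(1)$ remainder in Lemma~\ref{thm:same_as_lm} to be uniform over such a neighbourhood and to be dominated by an integrable function, so that the $m \rightarrow \infty$ limit may be taken inside the expectation. Second I would note that the Jacobian of $\bar u_\infty$ at its root is nonsingular: the $\beta\beta$-block is $-\sigma_0^{-2}E(x_ix_i^T)$, which is negative definite, the cross-block vanishes at $\beta=\beta_0$, and the $\sigma\sigma$-entry is nonzero. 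Given local uniform convergence of $\bar u$ to $\bar u_\infty$ and this nonsingularity, a standard continuity-of-roots argument — of exactly the type underlying the appeal to \cite{White1982} already made for the maximum likelihood estimator, or equivalently an implicit-function-theorem argument — yields $\theta^\infty_m \rightarrow (\beta_0,\sigma_0)$, and in particular $\beta^\infty_m \rightarrow \beta_0$.

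I expect the genuine obstacle to be precisely this interchange of limits: Lemma~\ref{thm:same_as_lm} is stated only as a pointwise $o(1)$ approximation of the score, whereas locating the limiting zero requires control of $\bar u$ in $\theta$ on a whole neighbourhood and enough integrability to pass $m \rightarrow \infty$ through $E(\cdot)$. Once that is in place the conclusion is immediate from the least-squares computation above, which is the conceptual heart of the statement: assuming Gaussian random effects is wrong, but for large $m$ it amounts to fitting least squares with non-Gaussian errors, and least squares still estimates the mean structure $\beta^T x_i$ consistently.
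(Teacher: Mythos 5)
Your proposal follows essentially the same route as the paper: reduce to the Gaussian linear model via Lemma~\ref{thm:same_as_lm}, observe that ordinary least squares consistency (equivalently, your explicit computation $E\{\nabla_\beta \ell_i(\theta;\eta_i^0)\} = \sigma^{-2}E(x_i x_i^T)(\beta_0-\beta)$) places the root of the limiting score equation at $\beta=\beta_0$ for every $\sigma$, and then transfer this back to the finite-$m$ equation $\bar u(\theta)=0$. The paper's own proof is in fact terser than yours --- it simply writes $\bar u(\theta^*;y)=\bar u(\theta^*;\eta)+o(1)\to 0$ and concludes --- so the uniform convergence, integrability and continuity-of-roots issues that you correctly flag as the genuine remaining work are left implicit there as well; you have not missed anything the paper supplies.
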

\begin{proof}
If we observe $\eta_i$ from the linear model $\eta_i = \beta^T x_i + b_i$,
the ordinary least squares estimator
of $\beta$, which is identical to the maximum likelihood estimator under the assumption of
 normally distributed errors $b_i$,
  is consistent irrespective of the true distribution of the error $b_i$.
 This means that
$\bar u(\theta^*;\eta) = 0$ is solved by $\theta^* = (\beta_0,\sigma^*)$, for some $\sigma^*$.
But
 \[\bar u(\theta^*;y) = \bar u(\theta^*;\eta)+o(1) = o(1) \rightarrow 0\]
 as $m \rightarrow \infty$. So $\beta^\infty_m \rightarrow \beta_0$ as $m \rightarrow \infty,$
 as claimed.
\end{proof}

As a consequence of Lemma \ref{thm:bias_tends_to_zero}, we have the following consistency result
\begin{theorem}
As $n, m \rightarrow \infty$, $\hat \beta_m^n \rightarrow \beta_0$ in probability.
\end{theorem}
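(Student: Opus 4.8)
The plan is to glue the deterministic statement of Lemma~\ref{thm:bias_tends_to_zero} to the classical large-$n$ behaviour of the misspecified maximum likelihood estimator at each fixed $m$. Write $\hat\theta^n_m = (\hat\beta^n_m, \hat\sigma^n_m)$ and split
\[
\hat\beta^n_m - \beta_0 = \bigl(\hat\beta^n_m - \beta^\infty_m\bigr) + \bigl(\beta^\infty_m - \beta_0\bigr),
\]
a sampling term plus a bias term. Given $\epsilon > 0$, Lemma~\ref{thm:bias_tends_to_zero} supplies an $M$ with $\|\beta^\infty_m - \beta_0\| < \epsilon/2$ whenever $m \ge M$, so it is enough to show that the sampling term is small uniformly over $m \ge M$: that $\sup_{m \ge M} \Pr\bigl(\|\hat\beta^n_m - \beta^\infty_m\| > \epsilon/2\bigr) \to 0$ as $n \to \infty$.

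For each individual $m$, the results of \cite{White1982} already give $\hat\theta^n_m \to \theta^\infty_m$ in probability as $n \to \infty$, under the usual regularity conditions: $\theta^\infty_m$ is the unique, well-separated maximizer of $\bar\ell_m(\theta) = E\{\ell_i(\theta;Y^{(i)})\}$ over a compact parameter region, and $n^{-1}\sum_i \ell_i(\theta;Y^{(i)})$ obeys a uniform law of large numbers there. To upgrade this to the uniformity in $m$ asked for above, I would lean on Lemma~\ref{thm:same_as_lm}: integrating the score expansion in $\theta$ shows that, on any compact $\theta$-set, $\ell_i(\theta;y^{(i)})$ agrees with the linear-model log-likelihood $\ell_i(\theta;\eta_i^0)$ up to a $\theta$-free term and an $o(1)$ remainder, so (by the same uniform-integrability step used in the proof of Lemma~\ref{thm:bias_tends_to_zero}) $\bar\ell_m$ converges to $\bar\ell_\infty(\theta) = E\{\ell_i(\theta;\eta_i^0)\}$, which is maximized at $(\beta_0,\sigma^*)$ with a strictly positive separation. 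Hence for all large $m$ the maximizer $\theta^\infty_m$ of $\bar\ell_m$ is well separated with a modulus that may be chosen free of $m$, while the envelopes dominating the integrand defining $L_i$ and its $\theta$-derivatives can also be taken free of $m$ on the relevant compact; a routine M-estimation argument then gives the uniform consistency, and combining with the bias bound proves the theorem.

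The step I expect to be the real obstacle is precisely this interchange of limits. The one-$m$-at-a-time consistency of \cite{White1982} does not by itself produce a joint limit in $(n,m)$, so one has to verify that the identifiability and domination conditions do not degrade as $m \to \infty$ but instead settle down to their linear-model counterparts, which is the work done by Lemma~\ref{thm:same_as_lm}. If one only wants the weaker statement in which $m = m_n$ is permitted to grow slowly enough with $n$, the argument reduces to a diagonal extraction: pick $N(m)$ with $\Pr\bigl(\|\hat\beta^n_m - \beta^\infty_m\| > 1/m\bigr) < 1/m$ for $n \ge N(m)$, take any $m_n \to \infty$ with $n \ge N(m_n)$, and combine with Lemma~\ref{thm:bias_tends_to_zero}.
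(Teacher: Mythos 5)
Your decomposition of $\hat\beta^n_m-\beta_0$ into a sampling term $\hat\beta^n_m-\beta^\infty_m$ and a bias term $\beta^\infty_m-\beta_0$ is exactly the argument the paper intends: the theorem is stated as an immediate consequence of Lemma~\ref{thm:bias_tends_to_zero} together with the fixed-$m$ consistency from \cite{White1982}, with no further proof given. You go beyond the paper in explicitly flagging that the two one-at-a-time limits do not automatically yield a joint limit in $(n,m)$ and in sketching (via Lemma~\ref{thm:same_as_lm} and uniform domination) how to justify the interchange --- a point the paper passes over in silence --- so your treatment is, if anything, more careful than the original.
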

The maximum likelihood estimator is consistent in this setting because the increasing
amount of information on each random effect allows us to obtain a good estimate
of each $\eta_i$.
A composite likelihood estimator which uses only small subsets of the data in each component likelihood
 does not make use of this increasing amount of information on each random effect, and so
will not be consistent under the same misspecification.

\subsection{Inconsistency of fixed-order composite likelihood estimators}
Recall that we construct a marginal composite likelihood by taking a product
of the densities of some subsets of the observed data. It is possible to construct
an auxiliary model for a new data-vector $\tilde y$, so that the likelihood for $\theta$ given
$\tilde y$ in the auxiliary model is identical to the composite likelihood for $\theta$ given $y$
in the original model.
To do this, we write $\tilde y = \{y_{s}, s \in S\}$, and model the components
indexed by different $s$ as independent, with $Y_{s}$ having the same marginal distribution
as in the original model. This construction allow us to find the limit of the composite likelihood
estimator in the original model by using the limit of the maximum likelihood estimator
in the auxiliary model. For example, the limit of the pairwise likelihood estimator of $\beta$
 as $n$ and $m$ tend to infinity
is $\beta^\infty_2$, the limit of the maximum likelihood estimator when $m$ is fixed at two.
In most cases, the pairwise likelihood estimator will be inconsistent as $n$ and $m$ tend to infinity, unless
the random-effect distribution is correctly specified.

\begin{example}
Consider making $m$ repeated observations on each of $n$ items $i=1,\ldots,n$, each of which
has a binary covariate $x_i$ associated with it, where $x_i \sim \text{Bernoulli}(1/2)$. Suppose
that each observation is binary, and that $pr(Y^{(i)}_{j}=1 \mid \alpha,\beta,\sigma,u_i,x_i) = \Phi(\alpha + \beta x_i+ b_i),$
where we suppose $b_i \sim N(0,\sigma^2)$.
Instead of treating
 $\sigma$ as an unknown parameter of interest, we fix it at some constant value $\tilde \sigma$. We suppose that in truth,
 $b_i \sim N(0,\sigma_0^2)$, where $\sigma_0 \not = \tilde \sigma$, and consider the impact of this misspecification. While such misspecification may be unrealistic in practice, it nonetheless provides a concrete example of the above asymptotic
 results.

Figure \ref{fig:alpha_lim} shows a contour plot of the limit $\alpha^\infty_m$ for various values of $m$ and $\tilde \sigma$,
when $\alpha_0=0.5$, $\beta_0 = 1$ and $\sigma_0 = 0.5$.
As expected from the theoretical results, for each fixed $\tilde \sigma \not = \sigma_0$, the asymptotic bias in $\hat \alpha$
diminishes with $m$.
Figure \ref{fig:alpha_pair_lim} shows a cut across this contour plot at $m=2$, which gives the limit of the pairwise
likelihood estimator of $\alpha$. If $\tilde \sigma \not = \sigma_0$, the pairwise likelihood estimator of $\alpha$ is
 not consistent. The corresponding results for $\beta$ are similar. The limiting value of a similarly defined $k$-wise likelihood estimator may be obtained
by a cut across the contour plot at $m=k$.  In the limit as $n$ and $m$ tend to infinity, the maximum likelihood estimator is consistent, but the
$k$-wise likelihood estimator is not, for any fixed $k$.

\begin{figure}
\includegraphics[width=\textwidth]{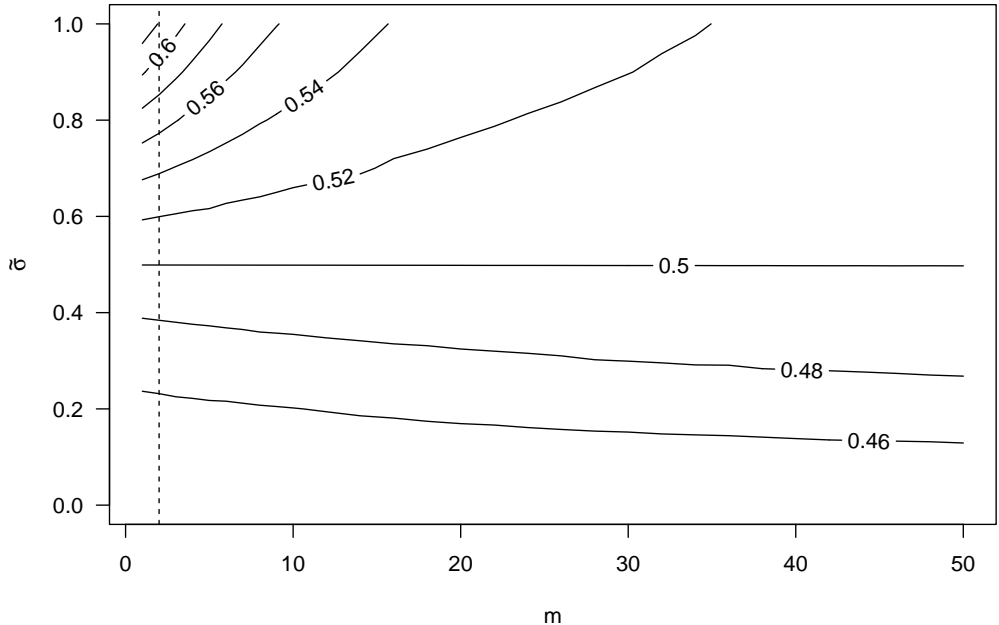}
\caption{The limit of $\hat \alpha$ as $n\rightarrow \infty$, for various values of $m$ and $\tilde \sigma$.
The dashed line at $m=2$ shows how to find the limit of the pairwise
likelihood estimator.}
\label{fig:alpha_lim}
\end{figure}

\begin{figure}
\includegraphics[width=\textwidth]{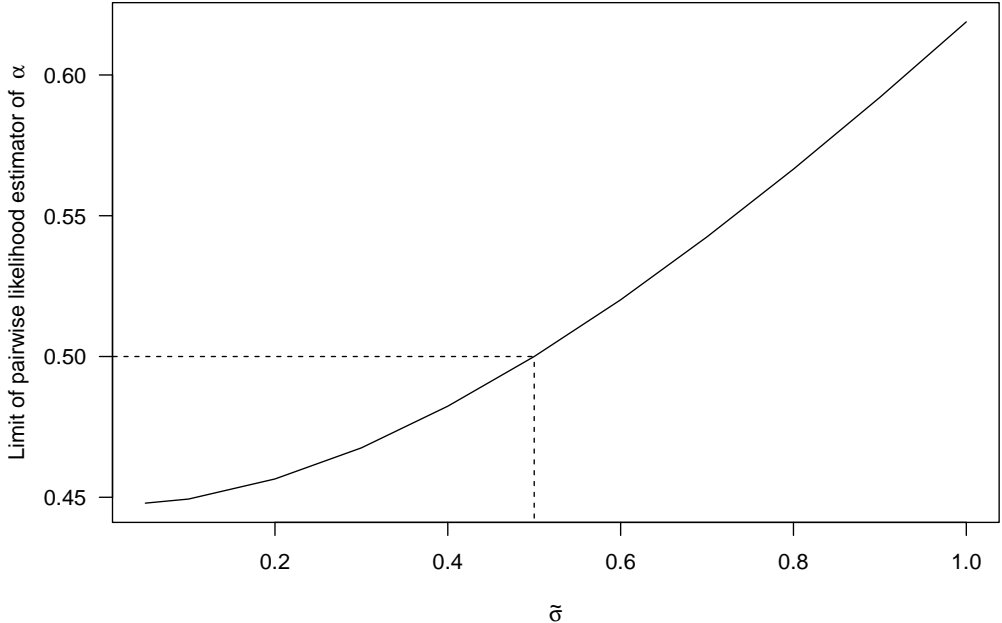}
\caption{The limit of the pairwise likelihood estimator of $\alpha$, for various values of $\tilde \sigma$.
The estimator is only consistent when $\tilde \sigma = \sigma_0$.}
\label{fig:alpha_pair_lim}
\end{figure}
\end{example}
\section{Discussion}
The example given is intended
to provide a warning against the notion that a marginal composite likelihood estimator will always
be at least as robust to model misspecification as the maximum likelihood estimator.
In the two-level model described, it is relatively straightforward to obtain the full likelihood, so
there is little computational motivation for using a composite likelihood in this case.
However, in models with more complex structure,
the high-dimensional integral in the likelihood no longer factorizes into a product of one-dimensional
integrals. Composite likelihoods have been proposed to provide a computationally feasible alternative to full likelihood inference for such models \citep{Bellio2005}, and similar results
to those obtained here for the two-level model can be expected to hold in these settings.

The results also provide a contribution to the literature on the impact of random-effect misspecification, by
making explicit the impact of the sparsity of the model on the robustness
of the maximum likelihood estimator to misspecification of the random-effects distribution.



\appendix
\label{appendix:proof}
\section*{Appendix}
\subsection*{Proof of Lemma \ref{thm:same_as_lm}}
\begin{proof}
We first reparameterize the integral, to write
the likelihood as an integral over $\eta_i$.
Let
\[g_i(\eta_i \mid y^{(i)},\theta) = \left[ \prod_{j=1}^{m} f_y(y^{(i)}_{j} \mid \eta_i) \right] \frac{1}{\sigma}
\phi \left(\frac{\eta_i - \beta^T x_{i}}{\sigma} \right),\]
so that
$L_i(\theta ; y^{(i)}) = \int_{-\infty}^\infty g_i(\eta_i \mid y^{(i)},\theta) d\eta_i.$
We may think of $g_i(. \mid y^{(i)},\theta)$ as a non-normalised posterior density for $\eta_i$, given
a prior \[\frac{1}{\sigma} \phi\left(\frac{\eta_i - \beta^T x_i}{\sigma}\right)\] which shrinks $\eta_i$ towards $\beta^T x_i$.
Provided that $\sigma>0$, as $m$ increases, the $\prod_{j=1}^{m} f_y(y^{(i)}_{j} \mid \eta_i)$ term, which does
not depend on $\theta$, dominates the prior, so that $\hat \eta_i(\theta)$,
the maximizer of $g_i(. \mid y^{(i)},\theta)$ over $\eta_i$, loses its dependence on $\theta$, and tends
towards its true value $\eta_i^0 = \beta_0^T x_{i} + b_i^0$.

As $m$ increases, $g_i(. \mid y^{(i)},\theta)$ becomes well approximated by a normal density
about $\hat \eta_i(\theta)$, and the relative error in a Laplace approximation to $L_i(\theta;y^{(i)})$ tends to zero.
Writing $\ell_i(\theta ; y^{(i)})=\log L_i(\theta ; y^{(i)})$,
as $m \rightarrow \infty$,
\[ \ell_i(\theta ; y^{(i)})= \log g_i(\hat\eta_i(\theta) \mid y^{(i)},\theta)
+\frac{1}{2} \log H_\theta(\hat\eta_i(\theta)) - \frac{1}{2}\log 2 \pi + o(1),\]
where
\[H_\theta (\eta_i) = \frac{\partial^2}{\partial \eta_i^2} \log g_i(\eta_i \mid y^{(i)},\theta).\]
So, for any two distinct $\theta_1$, $\theta_2$, the difference in log-likelihoods $\ell_i(\theta_1 ; y^{(i)})-\ell_i(\theta_2 ; y^{(i)})$ is
\begin{align*}
\log &\, g_i(\hat\eta_i(\theta_1) \mid y^{(i)},\theta_1) - \log g_i(\hat\eta_i(\theta_2) | y^{(i)},\theta_2)
+  \frac{1}{2} \log H_{\theta_1}(\hat\eta_i(\theta_1))-\frac{1}{2} \log H_{\theta_2}(\hat\eta_i(\theta_2)) +o(1) \\
& = \log g_i(\eta_i^0 \mid  y^{(i)},\theta_1) - \log g_i(\eta_i^0 \mid  y^{(i)},\theta_2)
+  \frac{1}{2} \log H_{\theta_1}(\eta_i^0)-\frac{1}{2} \log H_{\theta_2}(\eta_i^0) +o(1),
\end{align*}
since for any $\theta$, $\hat\eta_i(\theta) \rightarrow \eta_i^0$ in probability as $m \rightarrow \infty$. But
\[\frac{H_{\theta_2}(\eta_i)}{H_{\theta_1}(\eta_i)} = \frac{\frac{1}{m}  \left[ \sum_{j=1}^{m} \frac{\partial^2}{\partial \eta_i^2} \log f_y(y_{ij}\mid \eta_i)
+ \frac{\partial^2}{\partial \eta_i^2} \log \left\{ \frac{1}{\sigma_2}
\phi \left(\frac{\eta_i - \beta_2^T x_{i}}{\sigma_2}\right) \right\} \right]}
{\frac{1}{m}  \left[ \sum_{j=1}^{m} \frac{\partial^2}{\partial \eta_i^2} \log f_y(y_{ij}\mid \eta_i) + \frac{\partial^2}{\partial \eta_i^2} \log \left\{ \frac{1}{\sigma_1}
\phi \left(\frac{\eta_i - \beta_1^T x_{i}}{\sigma_1}\right) \right\} \right]} \rightarrow 1\]
in probability as $m \rightarrow \infty$, so
\begin{align*}
\ell_i(\theta_1 ;  y^{(i)}) - \ell_i(\theta_2 ; y^{(i)}) &=
 \log g_i(\eta_i^0\mid y^{(i)},\theta_1) - \log g_i(\eta_i^0\mid y^{(i)},\theta_2) +o(1) \\
 &= \log \left\{ \frac{1}{\sigma_1} \phi \left(\frac{\eta_i^0 - \beta_1^T x_i}{\sigma_1} \right) \right\}
 -\log \left\{ \frac{1}{\sigma_2} \phi \left(\frac{\eta_i^0 - \beta_2^T x_i}{\sigma_2} \right) \right\} +o(1) \\
 &= \ell_i(\theta_1 ;  \eta_{i}^0) - \ell_i(\theta_2;  \eta_i^0) +o(1).
\end{align*}
Letting $\theta_1 = \theta$, $\theta_2 = \theta+h$ and considering the limit as $h \rightarrow 0$, we therefore have
$u_i(\theta ;  y^{(i)}) = u_i(\theta ; \eta_{i}) +o(1),$ as claimed.
\end{proof}
\bibliographystyle{apalike}
\bibliography{references}

\end{document}